\documentclass[12pt]{article}
\usepackage{amsmath,amsthm,amssymb,amsfonts}
\usepackage[active]{srcltx}
\usepackage{latexsym}
\usepackage{enumerate}
\usepackage{verbatim}
\usepackage{multicol}
\usepackage{graphicx}
\usepackage{epsfig}
\usepackage{color}

\usepackage{fullpage}

\newtheorem{thm}{Theorem}
\newtheorem{lem}[thm]{Lemma}
\newtheorem{prop}[thm]{Proposition}
\newtheorem{cor}[thm]{Corollary}
\newtheorem{definition}{Definition}

\begin{document}
\small{
\title{Greedy balanced pairs in $N$-free ordered sets}
\author{Imed Zaguia\footnote{Supported by NSERC DDG-2018-00011}\\
{\small Dept of Mathematics and Computer Science }\\
{\small Royal Military College of Canada}\\
{\small P.O.Box 17000, Station Forces}\\
{\small K7K 7B4 Kingston, Ontario CANADA}\\
{\small \textrm{E-mail: imed.zaguia@rmc.ca}}
}
\date{\today}
\maketitle

\begin{abstract} An $\alpha$-greedy balanced pair in an ordered set $P=(V,\leq)$ is a pair $(x,y)$ of elements of $V$ such that the
proportion of greedy linear extensions of $P$ that put $x$ before $y$ among all greedy linear extensions is in the real interval
$[\alpha, 1-\alpha]$. We prove that every $N$-free ordered set which is not totally ordered has a $\frac{1}{2}$-greedy balanced pair.

{\bf Keywords:} Ordered set; N-free; Linear extension; Greedy Linear Extension; Balanced Pair; 1/3-2/3 Conjecture.
\newline {\bf AMS subject classification (2000): 06A06, 06A07}
\end{abstract}

\input{epsf}

\section{Introduction}

Throughout, $P =(V, \leq)$ denotes a \emph{finite ordered set}, that is, a finite set $V$ and a binary relation $\leq$ on $V$, that is, reflexive, antisymmetric and transitive. The \emph{dual} of
$P$ is the ordered set $P^{d}=(V,\leq^{d})$ such that $x\leq^{d} y$ if and only if $y\leq x$. For $x,y\in V$ we say that $x$ and $y$ are \emph{comparable} if $x\leq y$ or $y\leq x$, denoted $x\sim y$; otherwise we say that $x$ and $y$ are \emph{incomparable} and we set $x\nsim y$. A set of pairwise incomparable elements is called an \emph{antichain}. A \emph{chain} is a set of pairwise comparable elements, and is also called a \emph{totally ordered set}. The \emph{width} of $P$ is the maximal cardinality of an antichian.

For $x,y\in V$ we say that $y$ is an \emph{upper cover} of $x$ or that $x$ is \emph{a lower cover} of $y$, denoted $x \prec y$, if $x<y$ (that is $x\leq y$ and $x\neq y$) and there is no element $z\in V$ such that $x<z<y$.

In the sequel, when this does not cause confusion, we shall not distinguish between the order and the ordered set, using the same letter, say $P$, to denote both. A \emph{linear extension} $L$ of $P$ is a total order which contains $P$ i.e. $x\leq y$ in $L$ whenever $x\leq y$ in $P$. We denote by $\mathcal{L}(P)$ the set of all linear extensions of $P$.

For $0<\alpha\leq \frac{1}{2}$, an $\alpha$-\emph{balanced pair} in $P=(V,\leq)$ is a pair $(x,y)$ of elements of $V$ such that the ratio of linear extensions of $P$ that put $x$ before $y$ among all linear extensions, denoted $\mathbb{P}_{P}(x<y)$, is in the real interval $[\alpha, 1-\alpha]$. The $\frac{1}{3}$-$\frac{2}{3}$ Conjecture states that  every finite ordered set which is not a totally ordered has a $\frac{1}{3}$-balanced pair.

If true, the ordered set disjoint sum of a one element chain and a two element chain would show that the result is best possible. The $\frac{1}{3}$-$\frac{2}{3}$ Conjecture first
appeared in a paper of Kislitsyn \cite{ki}. It was also formulated independently by Fredman in about 1975 and again by Linial \cite{li}.

The $\frac{1}{3}$-$\frac{2}{3}$ Conjecture is known to be true for ordered sets of width two \cite{li}, for semiorders \cite{gb1}, for bipartite ordered sets
\cite{tgf}, for 5-thin ordered sets \cite{gb2}, for 6-thin ordered sets \cite{pec}, for $N$-free ordered sets \cite{zaguia1} and for ordered sets whose covering graph is a forest \cite{zaguia2}. See \cite{gb} for a survey.\\

In this paper, we consider a variation of the $\frac{1}{3}$-$\frac{2}{3}$ Conjecture.\\

Let $P=(V,\leq)$ be an ordered set. We recall that an element $v\in V$ is \emph{minimal} if there is no element $u\in V$ such that $u<v$. A \emph{maximal} element in $P$ is a minimal element of $P^d$.

\begin{definition}\label{def:greedy}
A linear extension $L =x_1<\cdots < x_n$ of $P$ is \emph{greedy} if it is constructed inductively as follows:
\begin{enumerate}[(i)]
\item $x_{1}$ is a minimal element of $P$.
\item Suppose that the first $i$ elements $x_1,\cdots , x_i$ have already been chosen and consider the set $U(x_i):=\{v\in V: x_i<v\}$. If
$U(x_i)=\varnothing$ or if $U(x_i)\neq \varnothing$ and no element of $U(x_i)$ is minimal in $P \setminus \{x_1,\cdots , x_i\}$, then
choose $x_{i+1}$ to be any minimal element of $P \setminus \{x_1,\cdots , x_i\}$. Otherwise choose $x_{i+1}$ to be a minimal element of $U(x_{i})$.
\end{enumerate}
\end{definition}

Intuitively, a greedy linear extension is built by always "climbing as high as you can" along the chains.


\begin{figure}[ht]
\begin{center}
\includegraphics[width=120pt]{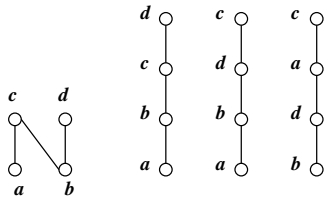}
\end{center}
\caption{An ordered set and its greedy linear extensions.}
\label{fence}
\end{figure}

Deciding whether an ordered set has at least $k$ greedy linear extensions is NP-hard \cite{kirstead-trotter}. Denote by $\mathcal{G}(P)$ the set of all greedy extensions of $P$ (see Figure \ref{fence} for an example). Note that an ordered set and its dual do not necessarily have the same number of greedy linear extensions. Indeed, the ordered set $M$ depicted in Figure \ref{5fence} has eight greedy linear extensions but its dual has four. It should also be noted that the dual of a greedy linear extension is not necessarily a greedy linear extension of the dual ordered set. Indeed, the dual of the greedy linear extension $L = a < b < c < d < e$ of the ordered set $X$ depicted in Figure \ref{5fence} is not a greedy linear extension of $X^d$ (this example is from \cite{rivalzaguia}). An ordered set $P$ is \emph{reversible} if $L^d\in \mathcal{G}(P^d)$ for every $L \in \mathcal{G}(P)$ \cite{rivalzaguia}.

\begin{definition}Let $P=(V,\leq)$ be an ordered set. A $4$-tuple $(a, b, c, d)$ of distinct elements of $V$ is an $N$ in $P$ if $a \prec b\succ c \prec d$ and $a\nsim d$. The ordered set $P$ is $N$-{\it free} if it does not contain an $N$.
\end{definition}
The  class of $N$-free ordered sets was first introduced by Grillet \cite{grillet}. It is a generalization of series-parallel ordered sets (see for example \cite{habib-jegou}).

Rival and Zaguia \cite{rivalzaguia} proved that $N$-free ordered sets are reversible. Since the dual of an $N$-free ordered set is also $N$-free it follows that an $N$-free ordered set and its dual have the same set of greedy linear extensions.


\begin{figure}[ht]
\begin{center}
\includegraphics[width=120pt]{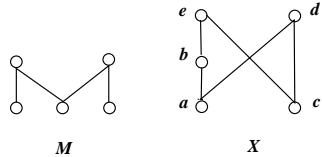}
\end{center}
\caption{Non reversible ordered sets.}
\label{5fence}
\end{figure}

We recall that if $y\nleqslant x$, then there exists a greedy linear extension that puts $x$ before $y$ \cite{bhj}. For $0<\alpha\leq \frac{1}{2}$, an $\alpha$-\emph{greedy balanced pair} in $P=(V,\leq)$ is a pair $(x,y)$ of elements of $V$ such that the ratio of greedy linear extensions of $P$ that put $x$ before $y$ among all greedy linear extensions, denoted $\mathbb{GP}_{P}(x<y)$, is in the real interval $[\alpha, 1-\alpha]$. It is then natural to consider the following problem.\\

\noindent\textbf{Problem:} What is the largest value of $\alpha$ such that every finite ordered set which is not totally ordered has an $\alpha$-greedy balanced pair?\\

The example shown in Figure \ref{fence} shows that $\alpha \geq \frac{1}{3}$.\\

Let $I$ be an ordered set such that $|I|\geq 2$ and let $\{P_{i}:=(V_i,\leq_i)\}_{i\in I}$ be a family of pairwise disjoint nonempty ordered sets that are all disjoint from $I$. The \emph{lexicographical sum} $\displaystyle \sum_{i\in I} P_{i}$ is the ordered set defined on
$\displaystyle \bigcup_{i\in I} V_{i}$ by $x\leq y$ if and only if
\begin{enumerate}[(a)]
\item There exists $i\in I$ such that $x,y\in V_{i}$ and $x\leq_i y$ in $P_{i}$; or
\item There are distinct elements $i,j\in I$ such that $i<j$ in $I$,   $x\in V_{i}$ and $y\in V_{j}$.
\end{enumerate}

The ordered sets $P_{i}$ are called the \emph{components} of the lexicographical sum and the ordered set $I$ is the \emph{index set}.
If $I$ is a totally ordered set, then $\displaystyle \sum_{i\in I} P_{i}$ is called a \emph{linear sum}. On the other hand, if $I$ is
an antichain, then $\displaystyle \sum_{i\in I} P_{i}$ is called a \emph{disjoint sum}. Henceforth we will use the symbol $\oplus$ to indicate the linear sum and the symbol $+$ to indicate the disjoint sum.

Let $L =x_1<\cdots < x_n$ be a linear extension of an ordered set $P$. A consecutive pair $(x_i,x_{i+1})$ of elements is a \emph{jump} in $L$ if $x_i$ is not comparable to $x_{i+1}$ in $P$. The jumps induce a decomposition
of the linear extension L into chains of $P$ that we call the \emph{blocks} of $L$. Thus, $L=C_1\oplus C_2 \oplus \cdots$ where
$(\sup C_i, \inf C_{i+1})$, $i = 1,2\cdots$, are the jumps of $L$. Let $s(P, L)$ be the number of jumps in $L$. The minimum number of jumps taken over all linear extensions of an ordered set is called the \emph{jump number}. This parameter has received a lot of attention, see \cite{chein-habib}, \cite{cogis-habib}, \cite{gierz-poguntke},
\cite{pulleyblank},\cite{rival84,rival83}, \cite{rivalzaguia}, \cite{syslo}. The relation between the jump number and greedy linear extensions can be found for example in \cite{zahar} and \cite{rivalzaguia}.

Let $(a_1,a_2,...,a_m)$ be a sequence of nonnegative integers summing to $n$ and suppose we have $m$ categories $K_1,...,K_m$. Following Stanley \cite{stanley} page 16 we denote by $\binom{n}{a_1,a_2,...,a_m}$ the number of ways of assigning each element of an $n$-set $S$ to one of the categories $K_1,...,K_m$ so that exactly $a_i$ elements are assigned to $K_i$.  These are generally referred as \emph{multinomial coefﬁcients}. For the case $m=2$, $\binom{n}{k,n-k}$ is just the binomial coefficient $\binom{n}{k}$. In general we have

\[\displaystyle \binom{n}{a_1,a_2,...,a_m}= \frac{n!}{a_1!\,a_2!\cdots a_m!}.\]

\begin{thm}Let $P=P_1+P_2+\cdots + P_m$. Then
\[|\mathcal{G}(P)|=\displaystyle \sum_{L\in \mathcal{G}(P_1)} \sum_{M\in \mathcal{G}(P_2)}\cdots \sum_{Z\in \mathcal{G}(P_m)} \binom{s(L,P_1)+\cdots+s(Z,P_m)+m}{s(L,P_1)+1,...,s(Z,P_m)+1}\]
\end{thm}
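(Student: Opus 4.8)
The plan is to count the greedy linear extensions of $P$ by sorting them according to the restrictions they induce on the components. Concretely, I would first establish the structural claim that a linear extension $L$ of $P=P_1+\cdots+P_m$ is greedy if and only if its restriction $L|_{P_i}$ to each component is a greedy linear extension of $P_i$, and $L$ is obtained by \emph{interleaving} the blocks of the $L|_{P_i}$ without breaking a block or reordering the blocks within a single component. Granting this, the greedy linear extensions of $P$ that restrict to a fixed tuple $(L_1,\dots,L_m)\in\mathcal{G}(P_1)\times\cdots\times\mathcal{G}(P_m)$ are exactly the block-interleavings of the sequences $L_1,\dots,L_m$, and summing their number over all such tuples will yield the formula.

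The engine behind the structural claim is a single observation about disjoint sums: since distinct components are mutually incomparable, for every $x$ in the ground set of $P_i$ we have $U(x)=\{v:x<v\}\subseteq P_i$, so $U(x)$ computed in $P$ equals $U(x)$ computed in $P_i$; moreover a $P_i$-element is a minimal element of a remaining poset $P\setminus\{x_1,\dots,x_j\}$ if and only if it is a minimal element of the corresponding remaining subposet of $P_i$, because anything lying below it must itself belong to $P_i$. I would use this to argue that the greedy test at each step depends only on the ambient component, and that the set of $P_i$-elements already placed is the same whether one reads $L$ or $L|_{P_i}$. From this it follows, in one direction, that each block of a greedy $L$ lies in a single component and is in fact a full block of $L|_{P_i}$: it cannot stop early, since the element that $L|_{P_i}$ would climb to next is still a minimal element of the relevant up-set and would force a climb in $P$ as well, and two blocks of $L|_{P_i}$ cannot fuse, since the incomparable pair realizing their jump stays incomparable in $P$. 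The same transfer shows that $L|_{P_i}$ inherits the greedy property. Reading the implication backwards gives the converse: at the top of any block the jump condition of $P$ holds automatically, so one may legitimately start the next available block of any component, while inside a block one is forced to keep climbing; hence every interleaving of the blocks of greedy extensions of the components is itself greedy.

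Once this bijection between the greedy extensions of $P$ restricting to $(L_1,\dots,L_m)$ and the block-interleavings is in place, the count is routine: component $i$ contributes $s(L_i,P_i)+1$ blocks, the total number of blocks is $\sum_i s(L_i,P_i)+m$, and the number of interleavings preserving the internal order of each component is the multinomial coefficient $\binom{\sum_i s(L_i,P_i)+m}{s(L_1,P_1)+1,\dots,s(L_m,P_m)+1}$; summing over all choices $L_1\in\mathcal{G}(P_1),\dots,L_m\in\mathcal{G}(P_m)$ gives the stated identity. I expect the main obstacle to be the bijection itself, and specifically the two delicate points that a greedy extension of $P$ can neither split nor fuse the blocks determined by the components (so that the block decomposition of $L$ is exactly the disjoint union of the component blocks), and that, conversely, the greedy algorithm run on an interleaving is never forced to climb across a jump nor forbidden from starting the next component's block. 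The cleanest way to handle both is to track, step by step, that the multiset of already-placed $P_i$-elements agrees in $L$ and in $L|_{P_i}$, and then to transfer each greedy test verbatim via the incomparability observation.
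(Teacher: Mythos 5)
Your proposal is correct and takes essentially the same route as the paper: restrict a greedy linear extension of $P$ to each component, show that the block (jump) structure is preserved and each restriction is greedy, and then count the ways to interleave the blocks with the multinomial coefficient. The only cosmetic difference is that you verify directly that every interleaving of component blocks is greedy, whereas the paper phrases this via swapping adjacent blocks belonging to distinct components; both rest on the same observation that up-sets and minimality are computed entirely within components.
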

\begin{proof}Let $L=C_1\oplus C_2 \oplus \cdots$ be a greedy linear extension of $P$ where
$(\sup C_i, \inf C_{i+1})$, $i = 1,2\cdots$, are the jumps of $L$. Each $C_i$, $i = 1,2\cdots$, is a chain of $P$ and hence each $C_i$, $i = 1,2\cdots$, is a chain of $P_j$ for some $1\leq j\leq m$. For a fixed $j$ such that $1\leq j\leq m$ we let $C^j_1\oplus C^j_2 \oplus \cdots$ be the blocks of $L$ that are chains in $P_j$ so that $C^j_l$ appears before $C^j_{l'}$ in $L$ if $l<l'$, for all $l,l'$. We claim that $L^j:=C^j_1\oplus C^j_2 \oplus \cdots$ is a greedy linear extension of $P_j$ where
$(\sup C^j_i, \inf C^j_{i+1})$, $i = 1,2\cdots$, are the jumps of $L^j$. Clearly, $L^j$ is a linear extension of $P_j$. If $C^j_i$ and $C^j_{i+1}$ are consecutive in $L$, then $(\sup C^j_i, \inf C^j_{i+1})$ is a jump in $L$ and hence is a jump in $L^j$. Else, $C^j_i$ and $C^j_{i+1}$ are not consecutive in $L$. It follows from our assumption that $L$ is greedy that $\sup C^j_i$ and $\inf C^j_{i+1}$ are incomparable in $P$ and hence $(\sup C^j_i, \inf C^j_{i+1})$ is a jump in $L^j$.

Let $i\geq 1$ be such that $C_i$ and $C_{i+1}$ are chains of two distinct components of $P$ and consider $L^{(i)}$ to be the linear order obtained from $L$ by swapping $C_i$ and $C_{i+1}$. Then $L^{(i)}$ is a greedy linear extension of $P$ if and only if $L$ is a greedy linear extension of $P$. 
\end{proof}

\begin{cor}Let $P$ be a disjoint sum of $m$ chains. Then $|\mathcal{G}(P)|=m!$.
\end{cor}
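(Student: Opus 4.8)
The plan is to apply the preceding theorem directly to the case where every component $P_i$ is a chain. The entire argument reduces to identifying, for each chain, its set of greedy linear extensions and its jump count, and then substituting these values into the multinomial formula. So the first step is to record two elementary facts about a chain $C$: since $C$ is totally ordered it has a unique linear extension, namely $C$ itself, which is trivially greedy, so $|\mathcal{G}(C)|=1$; and since consecutive elements of a chain are always comparable, this linear extension has no jumps, i.e. $s(C,C)=0$.

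With these facts in hand, I would carry out the substitution. Writing $P=P_1+\cdots+P_m$ with each $P_i$ a chain, every inner sum $\sum_{\cdot\in\mathcal{G}(P_i)}$ in the theorem collapses to a single term, because $|\mathcal{G}(P_i)|=1$. Moreover, in that single term each jump parameter $s(\cdot,P_i)$ equals $0$. Hence the multinomial coefficient appearing in the formula becomes
\[
\binom{0+0+\cdots+0+m}{0+1,\,0+1,\,\ldots,\,0+1}=\binom{m}{1,1,\ldots,1}=\frac{m!}{1!\,1!\cdots 1!}=m!,
\]
and since this is the only surviving summand, we obtain $|\mathcal{G}(P)|=m!$.

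I do not expect any genuine obstacle here: the corollary is an immediate specialization of the theorem, and the only thing to check carefully is that a chain contributes exactly one greedy linear extension with zero jumps, both of which are transparent. For intuition one may also verify the count directly: starting a greedy extension from the bottom of any chain forces one to climb that entire chain before moving on (the upper cover is always minimal among the remaining elements), so a greedy linear extension corresponds precisely to an ordering of the $m$ chains, of which there are $m!$. This direct check is a useful sanity test but is not needed once the theorem is invoked.
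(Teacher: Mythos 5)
Your proof is correct and matches the paper's intent: the corollary is stated immediately after the theorem with no separate proof, precisely because it is the specialization you carried out, with each chain contributing a single greedy linear extension having zero jumps, so the multinomial coefficient reduces to $\binom{m}{1,1,\ldots,1}=m!$. Your direct sanity check (a greedy extension must climb each chain in full, so greedy extensions correspond to orderings of the $m$ chains) is a nice independent confirmation but, as you note, not needed once the theorem is invoked.
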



\begin{figure}[ht]
\begin{center}
\includegraphics[width=75pt]{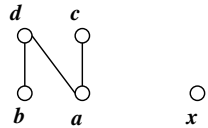}
\end{center}
\caption{A disjoint sum of ordered sets with no $\frac{1}{3}$-greedy balanced pair in any connected component.}
\label{disjointsum}
\end{figure}

It should be noted that for a disconnected ordered set a $\frac{1}{3}$-greedy balanced pair in a component is not necessarily a $\frac{1}{3}$-greedy balanced pair in the ordered set. The example depicted in Figure \ref{disjointsum} has eleven greedy linear extensions. No $\frac{1}{3}$-greedy balanced pair of the $N$ is $\frac{1}{3}$-greedy in the disjoint sum. Indeed, $\mathbb{GP}_{P}(b<a)=\mathbb{GP}_{P}(c<d)=\mathbb{GP}_{P}(b<c)=\frac{8}{11}>\frac{2}{3}$.

The proof of the following proposition is easy and is left to the reader.

\begin{prop}Let $P=P_1\oplus P_2\oplus \cdots \oplus P_k$. Then
\[|\mathcal{G}(P)|=\displaystyle \prod_{i=1}^{k}{|\mathcal{G}(P_i)|}.\]
\end{prop}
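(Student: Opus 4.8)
The plan is to exhibit an explicit bijection between $\mathcal{G}(P_1)\times\cdots\times\mathcal{G}(P_k)$ and $\mathcal{G}(P)$, from which the product formula is immediate. Since the linear sum is associative, it suffices to treat $k=2$ and then induct on $k$, so I would write $P=P_1\oplus P_2$ with $P_i=(V_i,\leq_i)$. The first thing to record is the rigid block structure forced by the linear sum: since every element of $V_1$ lies strictly below every element of $V_2$, every linear extension $L$ of $P$ has the form $L=L_1\oplus L_2$ with $L_i\in\mathcal{L}(P_i)$; that is, $L$ lists all of $V_1$ (in an order extending $P_1$) and then all of $V_2$. The candidate bijection is therefore $(L_1,L_2)\mapsto L_1\oplus L_2$, and everything reduces to the claim that $L_1\oplus L_2$ is greedy in $P$ if and only if $L_1$ is greedy in $P_1$ and $L_2$ is greedy in $P_2$.

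To prove the claim I would compare the greedy rule of Definition~\ref{def:greedy} applied in $P$ with the rules applied in $P_1$ and $P_2$, step by step. The key local observation concerns minimality: for an element $v\in V_1$ and any initial segment of already-chosen elements, $v$ is minimal in the corresponding remainder of $P$ if and only if it is minimal in the corresponding remainder of $P_1$, because the only elements of $P$ outside $P_1$ are those of $V_2$, and these all lie above $v$. I would apply this in three ranges of the construction. First, at the opening step the minimal elements of $P$ are exactly the minimal elements of $P_1$, so the first choice is governed by the same condition in $P$ and in $P_1$. Second, while the last chosen element $x_i$ still lies in $V_1$ and part of $V_1$ remains: here $U(x_i)=U_{P_1}(x_i)\cup V_2$, where $U_{P_1}(x_i)=\{v\in V_1: x_i<_1 v\}$; the minimal elements of the remainder of $P$ coincide with those of the remainder of $P_1$, and no element of $V_2$ is minimal in that remainder since $V_1$ is not yet exhausted. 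Hence an element of $U(x_i)$ is minimal in the remainder exactly when an element of $U_{P_1}(x_i)$ is minimal in the remainder of $P_1$, so the alternative selected by the greedy rule in $P$ matches the one selected in $P_1$.

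The step I expect to be the only real subtlety is the third range, the block transition, when $x_i$ is the last element of the $P_1$-block and the next element must come from $V_2$. At this moment the remainder of $P$ is exactly $P_2$, whose minimal elements are the minimal elements of $P_2$, and every such element lies in $U(x_i)$ because it sits above $x_i\in V_1$. Thus the ``otherwise'' branch of the greedy rule is triggered in $P$, and the elements of $U(x_i)$ that are minimal in the remainder are \emph{precisely} the minimal elements of $P_2$. Consequently the greedy rule in $P$ forces the first $P_2$-element to be a minimal element of $P_2$ and permits any such choice, which is exactly condition~(i) of the greedy construction for $L_2$ in $P_2$. Verifying that this matching runs in both directions — a greedy $L$ in $P$ splits into a greedy $L_1$ and a greedy $L_2$, and conversely greedy $L_1,L_2$ concatenate to a greedy $L$ — completes the proof that the map is a well-defined bijection, and the product formula $|\mathcal{G}(P)|=\prod_{i=1}^{k}|\mathcal{G}(P_i)|$ then follows by the induction on $k$.
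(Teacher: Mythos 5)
Your proof is correct, and there is nothing in the paper to compare it against: the paper states this proposition with the remark that ``the proof is easy and is left to the reader,'' so your argument supplies exactly the kind of routine verification the author omitted. The structure you use is the natural one: reduce to $k=2$ by associativity, observe that every linear extension of $P_1\oplus P_2$ splits rigidly as $L_1\oplus L_2$, and then check that the branches of Definition~\ref{def:greedy} taken in $P$ match, step by step, the branches taken in $P_1$ and then in $P_2$. Your identification of the block transition as the only delicate point is right: there the last chosen element $x_i$ is maximal in $P_1$, so $U(x_i)=V_2$, the remainder of $P$ is exactly $P_2$, and the ``otherwise'' branch fires and permits precisely the minimal elements of $P_2$ --- which is condition (i) for $L_2$. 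The one range you do not spell out, namely the steps after the transition where $x_i\in V_2$, is trivially fine (there $U_P(x_i)=U_{P_2}(x_i)$ and the remainder of $P$ coincides with the remainder of $P_2$, so the two greedy rules are literally identical), but you should state it for completeness since your three listed ranges do not formally cover it.
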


Let $P=(V,\leq)$ be an ordered set (not necessarily $N$-free). A subset $A$ of $V$ is called {\it autonomous} (or interval or module) in $P$ if for all $v\not\in A$ and for all $a,a^{\prime} \in A$

\begin{equation}
(v<a\Rightarrow v < a^{\prime})\;\mathrm{and}\;(a<v\Rightarrow a^{\prime} < v).
\end{equation}

For instance, the empty set, the singletons in $V$ and the whole set $V$ are autonomous in $P$.

Let $P$ be an ordered set. A $3$-tuple $(x,y,z)$ of $P$ is \emph{good} if $x<z$, $z\nsim y\nsim x$ and $\{x,y\}$ is autonomous for $P\setminus \{z\}$ (note that in this case we must have $x\prec z$). In \cite{zaguia2}, the author of this note showed that if $(x,y,z)$ is good, then $(x,y)$ is a $\frac{1}{3}$-balanced pair. The example depicted in Figure \ref{disjointsum} shows that this is not true for greedy linear extensions. Indeed, $(a,b,c)$ is a good set yet $(a,b)$ is not a $\frac{1}{3}$-greedy balanced pair.

Our main result is this.

\begin{thm}\label{nfree}Every finite $N$-free ordered set which is not totally ordered has a $\frac{1}{2}$-greedy balanced pair.
\end{thm}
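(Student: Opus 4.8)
The plan is to isolate one robust mechanism that forces \emph{exact} balance and then, by induction on $|V|$ together with the sum decompositions recorded above, to show that every non-chain $N$-free ordered set exhibits it. The mechanism is a \emph{symmetry lemma}: if $P$ admits an automorphism $\sigma$ with $\sigma(x)=y$ and $\sigma(y)=x$ for a pair of incomparable elements $x\nsim y$, then $(x,y)$ is a $\frac12$-greedy balanced pair. Greediness of a linear extension is defined purely in terms of the order relation, so $\sigma$ acts on $\mathcal{G}(P)$ by $L\mapsto \sigma L$; since $x\nsim y$, every $L\in\mathcal{G}(P)$ has either $x<_L y$ or $y<_L x$, and $x<_{\sigma L} y$ holds exactly when $y<_L x$. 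Hence $L\mapsto \sigma L$ is a bijection between the greedy extensions placing $y$ before $x$ and those placing $x$ before $y$, so each class has size $\tfrac12|\mathcal{G}(P)|$ and $\mathbb{GP}_P(x<y)=\tfrac12$. The whole proof then reduces to producing such a $\sigma$, or to passing to a strictly smaller $N$-free order.

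First I would dispose of the linear-sum case. If $P=P_1\oplus\cdots\oplus P_k$ is a nontrivial linear sum ($k\ge 2$), then $P$ not being a chain forces some summand $P_j$ to be a non-chain; that summand is $N$-free, since its internal covers are inherited from $P$, and by the product formula $|\mathcal{G}(P)|=\prod_i|\mathcal{G}(P_i)|$ the relative greedy order of two elements lying inside $P_j$ is governed entirely by $\mathcal{G}(P_j)$, so $\mathbb{GP}_P(x<y)=\mathbb{GP}_{P_j}(x<y)$ for $x,y\in P_j$. Thus a $\frac12$-greedy balanced pair of $P_j$ obtained by induction lifts verbatim. If $P$ is connected but not a linear sum, I would use the rigidity of $N$-freeness to produce an autonomous antichain $\{x,y\}$ with $x\nsim y$: the transposition of an autonomous antichain is an automorphism, so the symmetry lemma applies. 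The source of such antichains is that two minimal elements with a common upper cover have identical strict up-sets, for if $a\prec w\succ b$ and $b\prec d$, then the absence of the $N$ $(a,w,b,d)$ forces $a<d$, and running this over all covers shows $\{a,b\}$ is autonomous; dually for maximal elements sharing a lower cover. The structural point to nail down here is that a connected, linearly indecomposable $N$-free order that is not a single point always presents such a configuration, i.e.\ that its modular decomposition has no nontrivial prime quotient (equivalently, its autonomous blocks are glued along complete bipartite covers).

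The genuinely delicate case, and the one I expect to be the main obstacle, is the disjoint sum $P=P_1+\cdots+P_m$ with $m\ge 2$, since—as Figure~\ref{disjointsum} shows—balanced pairs do not descend from components. When two components are isomorphic, the isomorphism extends by the identity to an automorphism of $P$ swapping corresponding incomparable elements, and the symmetry lemma finishes the case. When no two components are isomorphic I would argue directly with the block structure: by the disjoint-sum theorem above, each $L\in\mathcal{G}(P)$ is a shuffle of the block sequences of chosen greedy extensions of the components, and all $\binom{\ell_1+\cdots+\ell_m}{\ell_1,\ldots,\ell_m}$ interleavings occur, where $\ell_i$ is the number of blocks contributed by $P_i$. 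I would take $x$ and $y$ to be maximal elements of two distinct components and compute $\mathbb{GP}_P(x<y)$ by conditioning on the chosen component extensions and summing the interleaving probabilities. The key quantity is the fraction of shuffles placing the block of $x$ before the block of $y$, and establishing that, after averaging over $\mathcal{G}(P_i)$ and $\mathcal{G}(P_j)$, this fraction is exactly $\tfrac12$ is the heart of the matter; I expect this to rest on a reflection symmetry among the block positions of a maximal element across the greedy extensions of one $N$-free component, together with the exchangeability of the interleaving, rather than on any ad hoc computation.

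Finally I would assemble the cases: a non-chain $N$-free order is either a nontrivial linear sum (recurse into a non-chain summand), a disjoint sum (the block-shuffle argument, using isomorphic components when available), or connected and linearly indecomposable (autonomous antichain plus the symmetry lemma); the induction bottoms out at the two-element antichain, which is trivially $\frac12$-balanced. The two places demanding real care are the absence of prime quotients in the $N$-free modular decomposition and the exact-half interleaving count for maximal elements in the disjoint-sum case, of which the latter is the crux.
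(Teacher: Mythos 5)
Your symmetry lemma is sound and is in fact the paper's engine too (Lemma \ref{lem:image} and Corollary \ref{l2}), and the linear-sum reduction is fine. But both of the claims you yourself flag as ``demanding real care'' are false, so the proof does not go through. For the connected case: a connected, linearly indecomposable $N$-free order need not contain a two-element autonomous antichain. Take $V=\{a,b,c,d,e\}$ with covers $a\prec c$, $a\prec d$, $d\prec e$, $b\prec e$. This order is $N$-free (the only element with two lower covers is $e$, and neither $b$ nor $d$ has another upper cover), its incomparability graph is connected, so it is not a nontrivial linear sum, and no incomparable pair is autonomous: $\{a,b\}$ is split by $c$, $\{b,c\}$ and $\{b,d\}$ by $a$, $\{c,d\}$ by $e$, and $\{c,e\}$ by $b$. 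So the configuration you want simply is not there; prime $N$-free orders exist. For the disjoint-sum case: your proposed pair fails already in the smallest nontrivial example. Take $P_1$ with $a<c$, $b<c$, take $P_2=\{d\}$, and let $P=P_1+P_2$, which is $N$-free with non-isomorphic components. Then $\mathcal{G}(P)=\{abcd,\ adbc,\ bacd,\ bdac,\ dabc,\ dbac\}$, and the maximal elements $c$ and $d$ of the two components satisfy $\mathbb{GP}_P(c<d)=\frac{2}{6}=\frac{1}{3}$. The hoped-for reflection symmetry among block positions cannot exist: a maximal element of a connected component lies in the \emph{last} block of every greedy extension of that component, so its block is never uniformly placed among the shuffles. (The balanced pair here is $(a,b)$, an autonomous antichain inside $P_1$.)

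The paper avoids both decompositions entirely and runs a dichotomy on minimal elements, which is the idea your proposal is missing. Either every minimal element $x$ of $P$ is maximal or has an upper cover that is not minimal in $P\setminus\{x\}$, in which case Lemma \ref{nfree4} (a structural argument resting on Lemma \ref{lem:cover}) produces two minimal elements forming an autonomous antichain and your symmetry lemma (Corollary \ref{l2}) finishes; or some minimal element $a$ is not maximal and \emph{all} of its upper covers are minimal in $P\setminus\{a\}$. In the latter case Lemma \ref{nfree2} shows that deleting $a$ changes nothing: for every incomparable pair $x,y$ in $P\setminus\{a\}$, the number of greedy extensions of $P$ putting $x$ before $y$ equals the corresponding number for $P\setminus\{a\}$, via the bijection that reinserts $a$ immediately below the first upper cover of $a$ occurring in a greedy extension of $P\setminus\{a\}$. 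Since $P\setminus\{a\}$ is again $N$-free and not a chain, induction on $|V|$ applies and the balanced pair lifts with exactly the same ratio. Note that this treats connected and disconnected orders uniformly --- in my two counterexamples above it is precisely this dichotomy that succeeds ($a$ is deletable in the first; Lemma \ref{nfree4} yields $\{a,b\}$ in the second) --- so the real work is in the deletion and structure lemmas, not in any modular or shuffle analysis.
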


In particular, an $N$-free ordered set which is not a chain has necessarily an even number of greedy linear extensions.

We mention that every finite ordered set can be embedded into a \emph{finite} $N$-free ordered set (see for example \cite{pz}).
It was proved in \cite{blk} that the number of (unlabeled) $N$-free ordered sets is \[\displaystyle 2^{\, n\log_{2}(n) + o (n\log_{2}(n))}.\]

We propose the following conjecture.\\
\textbf{Conjecture:} Every ordered set of width two has a $\frac{1}{3}$-greedy balanced pair.

\section{$N$-free ordered sets}

The following lemma will be useful. Its proof is easy and is left to the reader.

\begin{lem}\label{lem:cover}Let $P$ be an $N$-free ordered set and $x,y\in P$. If $x$ and $y$ have a common upper cover, then $x$ and $y$ have the same set of upper covers. Dually, If $x$ and $y$ have a common lower cover, then $x$ and $y$ have the same set of lower covers.
\end{lem}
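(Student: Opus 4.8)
The plan is to prove the upper-cover statement directly and then obtain the lower-cover statement for free: since the dual $P^{d}$ of an $N$-free ordered set is again $N$-free (as recalled in the excerpt), a common lower cover of $x,y$ in $P$ is a common upper cover in $P^{d}$, and the upper-cover case applied to $P^{d}$ yields that $x,y$ share their lower covers. So everything reduces to one claim: if $w$ is a common upper cover of $x$ and $y$, then every upper cover $z$ of $x$ is also an upper cover of $y$. By the symmetric roles of $x$ and $y$ this gives equality of the two upper-cover sets. Throughout I may assume $x\neq y$ and $z\neq w$, since otherwise the conclusion is immediate.

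The first step is to force the comparability $y\sim z$. The elements $y,w,x,z$ sit in the configuration $y\prec w\succ x\prec z$, which is the shape of an $N$ with endpoints $y$ and $z$. One checks these four elements are distinct: for instance $y=z$ would give $x\prec z=y\prec w$, hence $x<y<w$, contradicting $x\prec w$; the remaining inequalities $y\neq w$, $x\neq w$, $x\neq z$, $z\neq w$, $x\neq y$ are immediate. Since $P$ is $N$-free, the two endpoints cannot be incomparable, so $y\sim z$. The next step upgrades this to $y<z$: the alternative $z<y$ would give, using $x\prec z$, the chain $x<z<y<w$, so $x<y<w$ with $y$ strictly between $x$ and $w$, again contradicting $x\prec w$. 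Hence $y<z$.

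The step I expect to be the main obstacle is promoting $y<z$ to the covering relation $y\prec z$, which is where $N$-freeness must be invoked a second time. I would argue by contradiction: if $z$ does not cover $y$, choose an upper cover $u$ of $y$ lying on a maximal chain from $y$ to $z$, so that $y\prec u\leq z$; if $u=z$ we are already done, so assume $y\prec u<z$. First note $u\neq w$, since $u=w$ would yield $x<w<z$, contradicting $x\prec z$. Thus $u$ and $w$ are two distinct upper covers of $y$, and now $x\prec w\succ y\prec u$ is once more an $N$-shaped configuration with endpoints $x$ and $u$; $N$-freeness forces $x\sim u$. Each resulting possibility is then eliminated: $x<u$ gives $x<u<z$, contradicting $x\prec z$; while $u<x$ (or $u=x$, in which case $y\prec u=x$) gives $y<x<w$, contradicting $y\prec w$. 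This contradiction rules out any intermediate $u$, so $y\prec z$.

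The genuinely delicate part of the whole argument is not any single inequality but the repeated distinctness bookkeeping: at each application of $N$-freeness I must certify that the four elements involved are pairwise distinct, so that the configuration is an honest $N$ rather than a degenerate coincidence. Once those checks are in place, the two applications of $N$-freeness — one to $(y,w,x,z)$ and one to $(x,w,y,u)$ — drive the proof, and the symmetry/duality remarks in the first paragraph reduce the full statement to this single inclusion.
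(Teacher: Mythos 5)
Your proof is correct: the two applications of $N$-freeness (to the tuple $(y,w,x,z)$ to force $y\sim z$, and to $(x,w,y,u)$ to rule out an intermediate element between $y$ and $z$) are legitimate, the distinctness bookkeeping is complete, the case eliminations via the covering relations $x\prec w$, $x\prec z$, $y\prec w$ all check out, and the reduction of the lower-cover statement to the upper-cover one via the self-duality of the $N$ configuration is valid. The paper itself gives no proof of this lemma (it is declared ``easy and left to the reader''), and your argument is exactly the routine verification the author evidently had in mind, so there is nothing substantive to compare against.
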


\begin{lem}\label{nfree4} Let $P$ be an $N$-free ordered set which is not totally ordered and suppose that every minimal element $x$ is either maximal in $P$ or $x$ has an upper cover which is not minimal in $P\setminus \{x\}$. Then $P$ has two distinct minimal elements $m,m'$ such that $\{m,m'\}$ is autonomous in $P$.
\end{lem}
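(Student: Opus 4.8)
The plan is to first reduce the conclusion to a purely local statement about covers, and then to establish that statement by an infinite-descent argument that extracts its contradiction from $N$-freeness via Lemma \ref{lem:cover}.

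For the reduction I would observe that if $m$ and $m'$ are \emph{distinct minimal elements sharing a common upper cover}, then $\{m,m'\}$ is automatically autonomous in $P$. Indeed, Lemma \ref{lem:cover} then forces $m$ and $m'$ to have the same set of upper covers, and since in a finite order every element above a point lies above one of its upper covers, $m$ and $m'$ have the same strict up-set; being minimal they have empty strict down-sets, so both autonomy conditions hold (the downward one vacuously, the upward one because the up-sets coincide). Hence it suffices to produce a single element covering two distinct minimal elements. I would dispose of the trivial case first: if $P$ is an antichain, any two of its (at least two) elements have empty up-sets and thus already form an autonomous pair, so I may assume some minimal element $x_0$ is not maximal.

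The heart of the argument is a proof by contradiction of the local statement, assuming (H) that \emph{no element of $P$ covers two distinct minimal elements}. Starting from the non-maximal minimal $x_0$, the lemma's hypothesis gives an upper cover $c_0$ of $x_0$ that is not minimal in $P\setminus\{x_0\}$, i.e.\ $c_0$ has a lower cover $l_0$; climbing a maximal chain from a witness below $c_0$ realizes such an $l_0$, and minimality of $x_0$ forces $l_0\neq x_0$. Under (H) this $l_0$ must be \emph{non-minimal}, as otherwise $c_0$ would cover the two distinct minimal elements $x_0$ and $l_0$. Now comes the descent step, which I expect to be the crux: given a non-minimal lower cover $l_i$ of some $c_i$ that also covers a minimal element $x_i\neq l_i$, I pick a minimal element $x_{i+1}<l_i$ and the cover $y_{i+1}$ of $x_{i+1}$ on a maximal chain from $x_{i+1}$ up to $l_i$, so $x_{i+1}\prec y_{i+1}\leq l_i$. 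Applying the lemma's hypothesis to the non-maximal $x_{i+1}$ yields an upper cover $c_{i+1}$ with a lower cover $l_{i+1}\neq x_{i+1}$, again non-minimal by (H). Since $y_{i+1}$ and $c_{i+1}$ are two upper covers of $x_{i+1}$, they share the common lower cover $x_{i+1}$, so the dual half of Lemma \ref{lem:cover} forces them to have the same lower covers; thus $l_{i+1}\prec y_{i+1}$, giving $l_{i+1}<y_{i+1}\leq l_i$. This reproduces the configuration with indices shifted and $l_{i+1}<l_i$, so iteration builds a strictly decreasing infinite sequence $l_0>l_1>\cdots$, contradicting finiteness of $P$. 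Hence (H) fails and the reduction finishes the proof.

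The delicate point, and the step I would be most careful about, is exactly this descent. The naive attempts — trying to manufacture an $N$ directly from the chain $x_i\prec c_i\succ l_i$, or selecting a globally lowest branching element — are blocked precisely by Lemma \ref{lem:cover}, which equalizes the upper-cover sets of $x_i$ and $l_i$ and thereby destroys the incomparability an $N$ requires. The maneuver that makes the argument run is to apply the \emph{dual} statement of the lemma not to $c_i$ but to the two upper covers $y_{i+1},c_{i+1}$ of the freshly chosen minimal element $x_{i+1}$: this transports the branching witness $l_{i+1}$ down onto the chain below $l_i$ and produces the strict decrease that powers the descent.
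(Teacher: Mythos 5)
Your proof is correct, and it differs from the paper's in a meaningful organizational way, though the two arguments share the same core ingredients. Both rest on the reduction (implicit in the paper, explicit in yours) that two distinct minimal elements with a common upper cover form an autonomous pair, and both extract the crucial second lower cover by the same transport trick: apply the dual half of Lemma \ref{lem:cover} to two upper covers of a well-chosen minimal element, so that a lower cover of one becomes a lower cover of the other. Where you diverge is the finiteness mechanism. The paper fixes the starting configuration $x\prec y$ and works entirely inside the down-set of $y$: among the minimal elements $z\neq x$ with $z<y$ it chooses one lying on a chain to $y$ of maximum cardinality, takes $z\prec t<y$, uses the transport trick to produce a lower cover $t'\neq z$ of $t$, checks $t'\neq x$ (since $x\prec y$), and then concludes in one stroke that $t'$ is minimal --- otherwise a minimal element below $t'$ would lie on a strictly longer chain to $y$, contradicting the extremal choice. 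So the paper finishes after a single application of the hypothesis. You instead negate the target (hypothesis (H): no element covers two distinct minimal elements), and turn the transport trick into a descent step $l_{i+1}<l_i$, re-invoking the lemma's hypothesis at a freshly chosen minimal element $x_{i+1}$ on every iteration and invoking finiteness as an infinite-descent contradiction rather than as an extremal choice. The mechanisms are interchangeable here, but your version buys a cleaner loop invariant, no case split on whether $z\prec y$, and no need to remain inside the down-set of a fixed $y$ (your $c_{i+1}$ may sit anywhere in $P$); the paper's version is shorter once the extremal $z$ is fixed. One small point of hygiene: your phrase ``two upper covers'' suggests $y_{i+1}\neq c_{i+1}$, but you should note that the degenerate case $y_{i+1}=c_{i+1}$ is harmless, since then $l_{i+1}\prec y_{i+1}$ holds outright and the descent proceeds unchanged.
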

\begin{proof}If every minimal element of $P$ is also maximal, then $P$ is an antichain and the required conclusion follows and we are done. Otherwise, there exists a minimal element $x$ of $P$ which has an upper cover $y$ which is not minimal in $P\setminus \{x\}$. There exists then a minimal element $z$ distinct from $x$ such that $z<y$. If $z$ is a lower cover of $y$, then it follows from Lemma \ref{lem:cover} that $x$ and $z$ have the same set of upper covers. Clearly, $\{x,z\}\subseteq Min(P)$ and $\{x,z\}$ is autonomous in $P$. If $z$ is not a lower cover of $y$, then let $t$ be an upper cover of $z$ such that $z<t<y$. We claim that $t$ is not minimal in $P\setminus \{z\}$. If not, since $z$ is minimal and not maximal, $z$ has an upper cover $z'\neq t$ which is not minimal in $P\setminus \{z\}$. Hence, $z'$ has a lower cover $z''$ distinct from $z$. It follows from Lemma \ref{lem:cover} that   $z''$ is also a lower cover of $t$ contradicting our assumption that $z$ is the unique lower cover of $t$. This proves our claim. The previous argument applies to any minimal element $z$ distinct from $x$ such that $z<y$. Among all these  $z$ choose one laying on a chain of maximum cardinality containing $y$ and let $t'$ be a lower cover of $t$ distinct from $z$, where $t$ is an upper cover of $z$ such that $z<t<y$. Note that $t'\neq x$ because $x$ is a lower cover of $y$. Then $t'$ is minimal in $P$ because otherwise a minimal element $t''<t'$ verifies $t''$ distinct from $x$ and $t''<y$ and $t''$ lays on a chain containing $y$ of cardinality larger than that containing $z$, contradicting our choice of $z$. Finally we have $\{t',z\}\subseteq Min(P)$ and $\{t',z\}$ is autonomous in $P$. This completes the proof of the lemma.
\end{proof}

\section{A Proof of Theorem \ref{nfree}}

Let $P=(V,\leq)$ be an ordered set. A map $f$ from $V$ to $V$ is \emph{order preserving} if for all $x,y\in V$, $x\leq y$ implies $f(x)\leq f(y)$. An \emph{automorphism} of $P$ is a bijective map $f$ such that $f$ and $f^{-1}$ are order preserving.

\begin{lem}\label{lem:automo}Let $P=(V,\leq)$ be an ordered set and $f$ be an automorphism of $P$. The following propositions are true.
\begin{enumerate}[$(1)$]
  \item $f$ maps minimal elements of $P$ onto minimal elements of $P$.
  \item $f$ maps maximal elements of $P$ onto maximal elements of $P$.
  \item For all $x\in V$, $f(U(x))=U(f(x))$.
\end{enumerate}
\end{lem}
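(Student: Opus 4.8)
The plan is to first observe that an automorphism is in fact an order \emph{isomorphism} --- it not only preserves the order but also reflects it --- and then to read off each of the three statements directly. This single observation does all the work.

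First I would establish the equivalence $x<y \iff f(x)<f(y)$ for all $x,y\in V$. Since $f$ is order preserving, $x\leq y$ implies $f(x)\leq f(y)$; conversely, since $f^{-1}$ is order preserving, $f(x)\leq f(y)$ implies $x=f^{-1}(f(x))\leq f^{-1}(f(y))=y$. Hence $x\leq y \iff f(x)\leq f(y)$, and because $f$ is a bijection (so that $f(x)=f(y)$ holds exactly when $x=y$) this upgrades to the strict form $x<y \iff f(x)<f(y)$. For $(1)$, take $x$ minimal; if $f(x)$ were not minimal there would be some $w<f(x)$, and writing $w=f(z)$ by surjectivity and applying the reflected order would give $z<x$, contradicting minimality of $x$. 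Thus $f(x)$ is minimal, and applying the same reasoning to the automorphism $f^{-1}$ shows that $f$ sends the set of minimal elements \emph{onto} the minimal elements, not merely into it. Statement $(2)$ follows identically, either by repeating the argument with the inequalities reversed or by observing that $f$ is also an automorphism of the dual $P^{d}$, whose minimal elements are precisely the maximal elements of $P$.

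For $(3)$, recall $U(x)=\{v\in V: x<v\}$. If $v\in U(x)$ then the preserved order gives $f(x)<f(v)$, so $f(v)\in U(f(x))$, proving $f(U(x))\subseteq U(f(x))$. Conversely, any $w\in U(f(x))$ may be written $w=f(v)$ by surjectivity, and $f(x)<f(v)$ forces $x<v$ by the reflected order, so $v\in U(x)$ and $w\in f(U(x))$; this yields the reverse inclusion and hence equality. I do not expect any genuine obstacle here; the only point requiring care is to invoke each property of an automorphism at the correct moment --- order preservation of $f$ for one direction of the equivalence, order preservation of $f^{-1}$ for the reflected direction, and bijectivity both to pass to the strict inequality and to supply the surjectivity used in the onto-statements of $(1)$ and $(2)$ and in the reverse inclusion of $(3)$.
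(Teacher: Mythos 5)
Your proof is correct and follows essentially the same route as the paper: the paper leaves $(1)$ and $(2)$ to the reader (your arguments for them are the standard ones) and proves $(3)$ by exactly your two-inclusion scheme --- the forward inclusion from $f$ being order preserving and injective, the reverse from applying the same reasoning to $f^{-1}$. Your preliminary observation that $x<y \iff f(x)<f(y)$ just packages these two uses of order preservation up front; nothing differs in substance.
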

\begin{proof}Propositions (1) and (2) are straightforward. The proof is left to the reader.\\
(3) Let $t\in f(U(x))$. There exists $y>x$ such that $t=f(y)$. Since $f$ is order preserving and one-to-one we infer that $t=f(y)>f(x)$ proving $t\in U(f(x))$ and hence $f(U(x))\subseteq U(f(x))$. Similarly, and since $f^{-1}$ is one-to-one and order preserving we obtain $U(f(x)) \subseteq f(U(x))$.
\end{proof}

\begin{lem}\label{lem:image}Let $P=(V,\leq)$ be an ordered set, $f$ be an automorphism of $P$ and $L=x_1<\cdots <x_n$ be a greedy linear extension of $P$. Then $f(L):=f(x_1)<\cdots <f(x_n)$ is also a greedy linear extension of $P$.
\end{lem}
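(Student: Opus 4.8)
The plan is to verify the two clauses of Definition~\ref{def:greedy} for the sequence $f(x_1) < \cdots < f(x_n)$ by induction on the length $i$ of the prefix already constructed, showing that at every stage the greedy choice that produced $x_{i+1}$ is carried by $f$ to a \emph{legal} greedy choice producing $f(x_{i+1})$. Throughout I would write $y_j := f(x_j)$ and $S_i := \{x_1,\dots,x_i\}$, so that $f(S_i) = \{y_1,\dots,y_i\}$ since $f$ is a bijection. As a preliminary I would record that $f(L)$ is a linear extension at all: if $a < b$ in $P$, then applying the order-preserving map $f^{-1}$ gives $f^{-1}(a) < f^{-1}(b)$, so $f^{-1}(a)$ precedes $f^{-1}(b)$ in $L$; writing $a = y_p$, $b = y_q$ this says $p < q$, so $a$ precedes $b$ in $f(L)$. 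As $f$ is a bijection, $f(L)$ lists every element exactly once, hence is a linear extension of $P$.

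The heart of the argument is a single invariance statement. Because $f$ is an automorphism and $f(S_i) = \{y_1,\dots,y_i\}$, the restriction $f|_{V\setminus S_i}$ is an order isomorphism from $P\setminus S_i$ onto $P\setminus f(S_i)$; consequently $v$ is minimal in $P\setminus S_i$ if and only if $f(v)$ is minimal in $P\setminus f(S_i)$. Combining this with Lemma~\ref{lem:automo}(3), which gives $f(U(x_i)) = U(y_i)$, I obtain three transported equivalences: $U(x_i)=\varnothing$ iff $U(y_i)=\varnothing$; an element $v\in U(x_i)$ is minimal in $P\setminus S_i$ iff $f(v)\in U(y_i)$ is minimal in $P\setminus f(S_i)$; and $v$ is a minimal element of $U(x_i)$ iff $f(v)$ is a minimal element of $U(y_i)$. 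In particular, the condition of Definition~\ref{def:greedy}(ii) that decides which branch the construction takes at step $i$ holds for $S_i$ if and only if the analogous condition holds for $f(S_i)$.

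With this in hand the induction is routine. The base case is clause (i): $x_1$ is minimal in $P$, so by Lemma~\ref{lem:automo}(1) the element $y_1 = f(x_1)$ is minimal in $P$. For the inductive step, assume $y_1 < \cdots < y_i$ is a legal greedy prefix. If $x_{i+1}$ was chosen by the climbing branch, then $x_{i+1}\in U(x_i)$ and $x_{i+1}$ is minimal in $P\setminus S_i$; by the invariance statement $U(y_i)\neq\varnothing$ contains an element minimal in $P\setminus f(S_i)$, so the rule selects the climbing branch for $f(S_i)$ as well, and $y_{i+1}=f(x_{i+1})\in U(y_i)$ is minimal in $P\setminus f(S_i)$, hence a legal climbing selection. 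If instead $x_{i+1}$ was chosen as a minimal element of $P\setminus S_i$ because the climbing condition failed, then the invariance statement shows the climbing condition fails for $f(S_i)$ too, while $y_{i+1}=f(x_{i+1})$ is minimal in $P\setminus f(S_i)$; so $y_{i+1}$ is again a legal fallback choice. This closes the induction and shows $f(L)\in\mathcal{G}(P)$.

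I expect the only delicate point to be the bookkeeping in the inductive step, namely making sure that the branch-selecting clause ``some element of $U(x_i)$ is minimal in $P\setminus S_i$'' is transported correctly from $S_i$ to $f(S_i)$; this is exactly what the combination of the isomorphism $f|_{V\setminus S_i}$ with Lemma~\ref{lem:automo}(3) delivers, so no step requires a genuinely new idea beyond this invariance.
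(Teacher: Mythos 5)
Your proposal is correct and follows essentially the same route as the paper: both first verify that $f(L)$ is a linear extension via the order-preserving inverse $f^{-1}$, then induct along the prefix using Lemma~\ref{lem:automo}(1) for the base case and Lemma~\ref{lem:automo}(3) to transport the branch condition of Definition~\ref{def:greedy}(ii) from $\{x_1,\dots,x_i\}$ to $\{f(x_1),\dots,f(x_i)\}$. The only difference is cosmetic: you make explicit the invariance that $f$ restricts to an isomorphism $P\setminus S_i \to P\setminus f(S_i)$ (hence preserves minimality), a fact the paper's proof treats as ``clearly'' true.
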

\begin{proof}Clearly, $f(L)$ is a total order. Let $x<y$ in $P$. Since $f^{-1}$ is order preserving and one-to-one we infer that $f^{-1}(x)<f^{-1}(y)$ in $P$. Because $L$ is a linear extension of $P$ it follows that $f^{-1}(x)<f^{-1}(x)$ in $L$. Hence, $x=f(f^{-1}(x))<f(f^{-1}(y))=y$ in $f(L)$ proving that $f(L)$ is a linear extension of $P$.

We now prove that $f(L)$ is greedy by showing that $f(L)$ is obtained following the algorithm described in Definition \ref{def:greedy}. It follows from (1) of Lemma \ref{lem:automo} and $x_1$ is a minimal element of $P$ that $f(x_1)$ is a minimal element of $f(L)$. Suppose that the first $i$ elements $f(x_1),\cdots , f(x_i)$ have been chosen following the algorithm described in Definition \ref{def:greedy} and consider the set $U(f(x_i))$,  which by (3) of Lemma \ref{lem:automo}, is equal to $f(U(x_i))$. \underline{Suppose} $U(x_i)=\varnothing$ or $U(x_i)\neq \varnothing$ and no element of $U(x_i)$ is minimal in $P \setminus \{x_1,\cdots , x_i\}$. Then $U(f(x_i))=\varnothing$ or $U(f(x_i))\neq \varnothing$ and no element of $U(f(x_i))$ is minimal in $P \setminus \{f(x_1),\cdots , f(x_i)\}$. Since $L$ is greedy $x_{i+1}$ is a minimal element of $P \setminus \{x_1,\cdots , x_i\}$. Clearly, $f(x_{i+1})$ is a minimal element of $P \setminus \{f(x_1),\cdots , f(x_i)\}$. \underline{Otherwise}, $x_{i+1}$ is a minimal element of $U(x_{i})$ and hence $f(x_{i+1})$ is a minimal element of $U(f(x_{i}))$. This proves that $f(L)$ is obtained following the algorithm described in Definition \ref{def:greedy} and therefore is greedy. This completes the proof of the lemma.
\end{proof}

\begin{cor}\label{l2} Let $P$ be an ordered set that has an autonomous set $A$ such that $|A|\geq 2$ and $A$ is an antichain. Then any pair of distinct elements of $A$ is a $\frac{1}{2}$-greedy balanced pair.
\end{cor}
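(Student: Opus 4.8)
The plan is to exploit the symmetry between any two elements of $A$ by producing an automorphism of $P$ that exchanges them. Fix distinct $x,y \in A$ and let $\tau\colon V \to V$ be the transposition that swaps $x$ and $y$ and fixes every other element. First I would verify that $\tau$ is an automorphism of $P$. Since $A$ is an antichain, $x$ and $y$ are incomparable and each is incomparable to every other element of $A$, so no order relation internal to $A$ can be disturbed by $\tau$. For an element $v \notin A$, the autonomy of $A$ gives $v<x \Leftrightarrow v<y$ and $x<v \Leftrightarrow y<v$, so the comparabilities between $\{x,y\}$ and the outside are preserved when $x$ and $y$ are interchanged; all relations not involving $x$ or $y$ are untouched. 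Hence $\tau$ preserves the order in both directions and, being an involution, is an automorphism.

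Next I would apply Lemma \ref{lem:image}: for every greedy linear extension $L$, the sequence $\tau(L)$ is again a greedy linear extension of $P$. As $\tau^2 = \mathrm{id}$, the induced map $L \mapsto \tau(L)$ is an involutive bijection of $\mathcal{G}(P)$ onto itself.

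The decisive observation is how $\tau$ acts on the relative order of $x$ and $y$. If $x$ occupies position $p$ and $y$ position $q$ in $L$, then in $\tau(L)$ the element at position $p$ is $\tau(x)=y$ and the element at position $q$ is $\tau(y)=x$, while every other position carries the image of whatever stood there; consequently $L$ places $x$ before $y$ if and only if $\tau(L)$ places $y$ before $x$. Thus $\tau$ restricts to a bijection between $\{L \in \mathcal{G}(P): x<y \text{ in } L\}$ and $\{L \in \mathcal{G}(P): y<x \text{ in } L\}$. Since these two sets partition $\mathcal{G}(P)$, they have the same cardinality, whence $\mathbb{GP}_P(x<y)=\frac{1}{2}$ and $(x,y)$ is a $\frac{1}{2}$-greedy balanced pair.

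The only point requiring genuine care is the verification that $\tau$ is an automorphism, and this is exactly where the two hypotheses on $A$ enter: the antichain condition rules out any internal relation between $\{x,y\}$ and the remaining elements of $A$, while autonomy equates the external comparabilities of $x$ and $y$. Everything else is formal once Lemma \ref{lem:image} is in hand. Note finally that asserting a $\frac{1}{2}$-greedy balanced pair forces the proportion to equal $\frac{1}{2}$ exactly, since the interval $[\alpha,1-\alpha]$ degenerates to the single point $\frac{1}{2}$ when $\alpha=\frac{1}{2}$; the bijection above delivers precisely this equality.
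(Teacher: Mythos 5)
Your proposal is correct and follows essentially the same route as the paper: define the transposition of $x$ and $y$, check via autonomy (and the antichain condition) that it is an automorphism, invoke Lemma \ref{lem:image} to see that it permutes $\mathcal{G}(P)$, and conclude that it bijects the greedy extensions putting $x$ before $y$ with those putting $y$ before $x$. Your write-up is in fact slightly more explicit than the paper's about why the transposition preserves the order, but the argument is identical in substance.
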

\begin{proof}Assume $|A|\geq 2$ and let $x,y\in A$. Let $L$ be a greedy linear extension that puts $x$ before $y$. Consider the map $f$ defined as follows: $f(x)=y$, $f(y)=x$ and $f(t)=t$ if $t\not \in \{x,y\}$. Clearly $f$ is a bijection. It follows from our assumption that $A$ is autonomous that $f$ and $f^{-1}$ are order preserving and hence $f$ is an automorphism of $P$. Applying Lemma \ref{lem:image} we obtain that $f(L)$ is a greedy linear extension of $P$. Note that $f(L)$ is obtained from $L$ by swapping the positions of $x$ and $y$. The swapping operation defines a bijection from the set of greedy linear extensions that put $x$ before $y$ onto the set of greedy linear extensions that put $y$ before $x$. Therefore, ${\mathbb{GP}}_{P}(x<y)=\frac{1}{2}$.
\end{proof}

\begin{lem}\label{nfree2} Let $P$ be an ordered set (not necessarily $N$-free) and suppose that $P$ has a
minimal element $a$ such that $a$ is not maximal in $P$ and every upper cover of $a$ is minimal in $P\setminus \{a\}$. Let $x$ and $y$
be two incomparable elements of $P\setminus \{a\}$. Then the number of greedy linear extensions of $P$ that put $x$ before $y$ equals the number of greedy linear extensions of $P\setminus \{a\}$ that put $x$ before $y$.
\end{lem}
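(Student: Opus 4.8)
The plan is to construct an explicit bijection $\Phi\colon \mathcal{G}(P)\to\mathcal{G}(P\setminus\{a\})$ that preserves the relative order of every pair of elements distinct from $a$. Since $x,y\in P\setminus\{a\}$, such a bijection matches the greedy linear extensions of $P$ putting $x$ before $y$ with those of $P\setminus\{a\}$ putting $x$ before $y$, and the asserted equality of cardinalities follows immediately. The map $\Phi$ will simply delete $a$, and its inverse $\Psi$ will reinsert $a$ in a uniquely determined slot.

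First I would prove the structural fact that pins down the position of $a$: in every greedy linear extension $L$ of $P$, the element $a$ is immediately followed by an upper cover of $a$, namely by the first element of $U(a)$ occurring in $L$. Indeed, $a$ is minimal, so $a$ never lies in $U(x_j)$ for any earlier $x_j$; hence $a$ is never a climbing target and is always chosen by the free-choice clause of Definition~\ref{def:greedy}. Once $a$ is placed, the hypothesis that every upper cover of $a$ is minimal in $P\setminus\{a\}$ means each such cover has $a$ as its sole element below it, so after $a$ is removed from the remaining poset every upper cover of $a$ becomes minimal there. As $a$ is not maximal, $U(a)\neq\varnothing$, so some element of $U(a)$ is minimal in the remaining poset and the algorithm is forced to climb to a minimal element of $U(a)$, i.e.\ to an upper cover $u$ of $a$. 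Since no element of $U(a)$ can precede $a$ in $L$, this $u$ is exactly the first element of $U(a)$ in $L$. This identifies $\Psi$: given $L'\in\mathcal{G}(P\setminus\{a\})$, let $u^\ast$ be the first element of $U(a)$ occurring in $L'$ (an upper cover of $a$, by the same argument) and insert $a$ immediately before $u^\ast$. As operations on sequences, $\Phi$ and $\Psi$ are manifestly mutually inverse and preserve the order of all elements other than $a$; the whole content of the proof is to check that $\Phi(L)\in\mathcal{G}(P\setminus\{a\})$ and $\Psi(L')\in\mathcal{G}(P)$.

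Both verifications rest on one observation, which I expect to be the main obstacle to write cleanly. Deleting the minimal element $a$ can alter the minimality of a surviving element $v$ only if $a<v$; and if $a<v$ while $v$ lies in $U(y_k)$ for the last-placed element $y_k\neq a$, then $v$ is not an upper cover of $a$ (such a cover has $a$ as its sole lower element and so cannot sit above $y_k$), so there is an upper cover $u$ of $a$ with $a<u<v$. Before the slot of $a$ is reached no element of $U(a)$, in particular no such $u$, has yet been placed, so $u$ is present in both remaining posets and witnesses that $v$ is non-minimal in both. Hence for every $v\in U(y_k)$ minimality is the same with or without $a$, so the climb-or-free decision coincides in the two runs; moreover the element actually appended before the slot of $a$, being neither $a$ nor a member of $U(a)$, has its minimality unaffected by $a$ and is therefore admissible in both runs.

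Thus the two runs agree up to the slot of $a$. At that slot the decision must be free: an upper cover of $a$ can be reached by a climb only from $a$ itself, so in $P\setminus\{a\}$ the first element $u^\ast$ of $U(a)$ cannot have been forced by climbing, whence (by the minimality equivalence just established) the corresponding decision in $P$ is free as well, so $a$ is legitimately insertable there; and immediately afterwards the algorithm is forced to climb from $a$ to $u^\ast$, exactly as required. From the element after $a$ onward the two remaining posets are identical, so the runs agree step for step to the end. Combining these three regimes shows $\Phi(L)\in\mathcal{G}(P\setminus\{a\})$ and $\Psi(L')\in\mathcal{G}(P)$, so $\Phi$ is the desired order-preserving bijection and the proof is complete.
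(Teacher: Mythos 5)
Your proof is correct, and it rests on the same underlying bijection as the paper's: delete $a$, and reinsert $a$ immediately before the first upper cover of $a$ (equivalently, the first element of $U(a)$) occurring in the extension. The differences are in the verification, and they are genuine. First, by establishing up front the structural fact that in every greedy linear extension of $P$ the element $a$ sits immediately before the first element of $U(a)$, you make injectivity automatic: the position of $a$ is recoverable from $L\setminus\{a\}$, so deletion and insertion are manifestly mutually inverse. The paper instead proves injectivity by a separate, rather laborious argument, viewing $L$ and $M$ as order-preserving bijections onto $1<\cdots<n$ and chasing indices to show $L(a)=M(a)$. Second, your minimality-transfer observation --- that for $v\in U(y_k)$ with $y_k\neq a$, removing $a$ cannot change whether $v$ is minimal in the remaining poset, since any such $v$ above $a$ lies above an as-yet-unplaced upper cover of $a$ --- is precisely what is needed to see that every step \emph{before} the slot of $a$ is a legal greedy step in one run if and only if it is in the other. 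The paper's well-definedness argument concentrates on the single critical step from the predecessor $a_*$ to the successor $a^*$ of $a$ and handles it by contradiction, leaving the earlier steps implicitly justified; your observation fills that gap explicitly, and it also carries the surjectivity direction (insertion), which the paper dispatches in one terse sentence. The only polishing I would ask for is to state the degenerate cases ($a$ first in $L$, or $u^*$ first in $L'$, where there is no predecessor $w$), which your regime analysis covers trivially but does not mention.
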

\begin{proof}Let $L$ be a greedy linear extension of $P$. We claim that $L\setminus \{a\}$ is a greedy linear extension of $P\setminus \{a\}$. Clearly, $L\setminus \{a\}$ is a linear extension of $P\setminus\{a\}$. Also, the successor $a^{*}$ of $a$ in $L$ (a cannot be maximal in $L$ since it is not maximal in $P$) must be an upper cover of $a$ in $P$ since every upper cover of $a$ is minimal in $P\setminus \{a\}$ by assumption. Now let $a_{*}$ be the predecessor of $a$ in $L$ (if such an element does not exist, then $a$ is minimal in $L$ and hence $L\setminus \{a\}$ is greedy and we are done). Then $a\nsim a_{*}\nsim a^{*}$. Indeed, the first incomparability follows from $a$ is minimal in $P$ and $a_*<a$ in $L$ and the second incomparability follows from $a^{*}$ is minimal in $P\setminus \{a\}$ and $a_*<a^*$ in $L\setminus \{a\}$. Now suppose for a contradiction that $L\setminus \{a\}$ is not greedy. Since the successor of $a^*$ of $a_*$ in $L\setminus \{a\}$ is minimal in $P\setminus \{a\}$ we infer from Definition \ref{def:greedy} that $U(a_*)$ has a minimal element $z$ in $P\setminus\{a\}$. It follows that $a_{*}$ is the unique lower cover of $z$ in $P\setminus \{a\}$. Since $a\nsim a_*$ and all upper covers of $a$ in $P$ are minimal in $P\setminus \{a\}$ it follows that $z$ is incomparable to $a$ in $P$ and hence $z$ must have appeared immediately after $a_{*}$ in $L$ which is not the case contradicting our assumption and proving our claim.

Denote by $\mathcal{G}(P:x<y)$ the set of greedy linear extensions of $P$ that put $x$ before $y$. We will exhibit a
bijection from $\mathcal{G}(P:x<y)$ onto $\mathcal{G}(P\setminus \{a\}:x<y)$ where $a\not \in \{x,y\}$. Let $L$ be a
greedy linear extension of $P$ that puts $x$ before $y$. Consider the map $\phi$ from $\mathcal{G}(P:x<y)$ to
$\mathcal{G}(P\setminus \{a\}:x<y)$ that maps $L$ to $L\setminus \{a\}$. Then $\phi$ is well defined. Moreover $\phi$ is onto. Indeed, if $L'$ is an element of $\mathcal{G}(P\setminus \{a\}:x<y)$, then let $a^{*}$ be the least element in $L'$ being an upper cover of $a$ (in $P$) and insert $a$ immediately before $a^{*}$ to obtain a linear extension $L$ in which $x<y$ and which is clearly
greedy because if $a_*$ is the predecessor of $a^*$ in $L'$, then $a\nsim a_*\nsim a^*$. Next we prove that $\phi$ is
one-to-one. Let $L$ and $M$ be two greedy linear extensions of $P$ that put $x$ before $y$. We view $L$ and $M$ as two order preserving
bijections from $P$ onto the total order $1<\cdots <n$ where $n$ is the number of elements of $P$. Let $i$ and $j$ be such that $L(a)=i$ and $M(a)=j$. Let $a^{*}$ be the element of $P$ such that $L(a^{*})=i+1$. Since $L$ is greedy, $a^{*}$ is an upper cover of
$a$ in $P$. Then $M(a^{*})>M(a)=j$ since $a< a^{*}$ in $P$.

Suppose that $\phi(L)=\phi(M)$, that is, $L\setminus \{a\}=M\setminus \{a\}$.
Then $L\setminus \{a\}(x)=M\setminus \{a\}(x)$ for all $x\neq a$. Note that $L\setminus\{a\}(x)=L(x)$ for all $x<a$ in $L$ and $L\setminus\{a\}(x)=L(x)-1$ for all $x>a$. Hence,
$M(a^*)-1=M\setminus\{a\}(a^*)=L\setminus\{a\}(a^*)=L(a^*)-1$. Therefore, $M(a^*)=L(a^*)$. By symmetry we show that if $b$ is the element of $P$ such that $M(b)=M(a)+1$, then $L(b)=M(b)$. Clearly we have $M(b)\leq M(a^*)$ and $L(a^*)\leq L(b)$. Hence, $M(b)\leq M(a^*)=L(a^*)\leq L(b)=M(b)$ and therefore $L(a^*)=L(b)$. Since $L$ is a bijection we infer that $b=a^*$. Finally we have that $L(a)=M(a)$ and we are done. This proves that $\phi$ is one-to-one.
\end{proof}

\begin{cor}\label{nfree3} Let $P$ be an ordered set (not necessarily $N$-free) and suppose that $P$ has a
minimal element $a$ such that $a$ is not maximal in $P$ and every
upper cover of $a$ is minimal in $P\setminus \{a\}$. Then
$|\mathcal{G}(P)|=|\mathcal{G}(P\setminus \{a\})|$.
\end{cor}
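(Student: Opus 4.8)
The plan is to recognize that Corollary \ref{nfree3} falls out directly from the machinery already built in Lemma \ref{nfree2}, once one observes that the auxiliary constraint ``$x<y$'' there was never actually used. Concretely, I would consider the deletion map
\[
\phi\colon \mathcal{G}(P)\longrightarrow \mathcal{G}(P\setminus\{a\}),\qquad \phi(L)=L\setminus\{a\},
\]
and aim to prove that it is a bijection; the equality $|\mathcal{G}(P)|=|\mathcal{G}(P\setminus\{a\})|$ is then immediate. The hypotheses imposed on $a$ in Corollary \ref{nfree3} are word-for-word the hypotheses placed on $a$ in Lemma \ref{nfree2}, so all of the structural facts proved there are available.

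First I would verify that $\phi$ is well defined, that is, that $L\setminus\{a\}$ is a greedy linear extension of $P\setminus\{a\}$ for every $L\in\mathcal{G}(P)$. This is exactly the opening claim in the proof of Lemma \ref{nfree2}, and its justification invokes only that $a$ is minimal, not maximal, and that every upper cover of $a$ is minimal in $P\setminus\{a\}$; it never mentions $x$ or $y$, so it transfers verbatim. For surjectivity I would reuse the inverse construction: given $L'\in\mathcal{G}(P\setminus\{a\})$, let $a^{*}$ be the least element of $L'$ that is an upper cover of $a$ in $P$ and reinsert $a$ immediately before $a^{*}$; the hypothesis on the upper covers of $a$ ensures the resulting extension $L$ is greedy and satisfies $\phi(L)=L'$. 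For injectivity I would repeat the positional computation of Lemma \ref{nfree2} showing that $\phi(L)=\phi(M)$ forces $L$ and $M$ to place $a$ in the same position, whence $L=M$; this argument tracks only the location of $a$ and of its successor relative to the deletion, and is likewise insensitive to the dropped pair.

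I do not expect a genuine obstacle here, since Lemma \ref{nfree2} does all the real work; the one point demanding honest care is confirming that each of the three steps is independent of the elements $x,y$, so that the restricted bijection extends to all of $\mathcal{G}(P)$. It is worth noting why the direct bijection is cleaner than trying to derive the count from Lemma \ref{nfree2} by summing: such a summation would require fixing an incomparable pair $x,y$ in $P\setminus\{a\}$ to split $\mathcal{G}(P\setminus\{a\})$ into $\{x<y\}$ and $\{y<x\}$ parts, which fails in the degenerate case where $P\setminus\{a\}$ is a chain (forcing a separate trivial check). Treating $\phi$ itself as the bijection dispenses with this case distinction and handles every $P$ uniformly.
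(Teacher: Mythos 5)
Your proof is correct and is essentially the paper's own (implicit) argument: the paper states this result as an immediate corollary of Lemma \ref{nfree2} precisely because the deletion map $L\mapsto L\setminus\{a\}$ constructed in that lemma's proof is a bijection whose three verifications (well-definedness, surjectivity, injectivity) nowhere use the pair $(x,y)$. Your closing remark is also apt: deriving the count instead by summing the lemma over the two orderings of an incomparable pair would require separately handling the degenerate case where $P\setminus\{a\}$ (and hence $P$) is a chain, a case the direct bijection handles uniformly.
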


We now proceed to the proof of Theorem \ref{nfree}.

\begin{proof}(Of Theorem \ref{nfree})
The proof is by induction on the number of elements of the ordered set. Let $P$ be an $N$-free
ordered set not a chain. If $P$ has exactly two elements, then $P$ is an antichain in which case $P$ satisfies the conclusion of the
theorem. Next we suppose that $P$ has at least three elements and is neither a chain nor is an antichain. If every minimal element $a\in P$ is either maximal in $P$ or $a$ has an upper cover which is not minimal in $P\setminus \{a\}$, then it follows from Lemma \ref{nfree4} that $P$ has two minimal elements $x,y$ such that $\{x,y\}$ is autonomous in $P$. We then deduce from Corollary \ref{l2} that $P$ has a $\frac{1}{2}$-greedy balanced pair. Else, $P$ has a minimal element $a$ such that $a$ is not maximal in $P$ and every upper cover of $a$ is minimal in $P\setminus \{a\}$ and consider $P\setminus \{a\}$ which is $N$-free, not a chain and has fewer elements than $P$. By the induction hypothesis, $P\setminus \{a\}$ has a $\frac{1}{2}$-greedy balanced pair. From Lemma \ref{nfree2} we deduce that a $\frac{1}{2}$-greedy balanced pair in $P\setminus \{a\}$ remains $\frac{1}{2}$-greedy balanced in $P$. This completes the proof of the theorem.
\end{proof}


\end{document}